\newcommand{\lra}{\longrightarrow}
\newcommand{\bi}{\begin{itemize}}
\newcommand{\ei}{\end{itemize}}
\newcommand{\bpf}{\begin{proof}}
\newcommand{\epf}{\end{proof}}
\newcommand{\be}{\begin{equation}}
\newcommand{\ee}{\end{equation}}
\newcommand{\bl}{\begin{lemma}}
\newcommand{\el}{\end{lemma}}
\newcommand{\bt}{\begin{theorem}}
\newcommand{\et}{\end{theorem}}
\newcommand{\ann}{\mathrm{ann}}
\newcommand{\Kdim}{\mathrm{K.dim}}
\newcommand{\al}{\alpha}
\newcommand{\gb}{\beta}
\newcommand{\gam}{\gamma}
\newcommand{\gl}{\lambda}
\newcommand{\fsl}{\mathfrak{sl}}
\newcommand{\C}{\mathbb{C}}
\newcommand{\Z}{\mathbb{Z}}
\newcommand{\N}{\mathbb{N}}
\newtheorem{theorem}{Theorem}[section]
 \newtheorem{lemma}[theorem]{Lemma}
\newtheorem{thm}{Theorem}[section]
\newtheorem*{thm*}{Theorem}
\newtheorem{lem}[thm]{Lemma}
\newtheorem*{lem*}{Lemma}
\newtheorem{prop}[thm]{Proposition}
\newtheorem*{prop*}{Proposition}
\newtheorem*{cor*}{Corollary}
\newtheorem*{defn*}{Definition}
\newtheorem*{notadefn*}{Notation and Definition}
\newtheorem*{nota*}{Notation}
\newtheorem*{note*}{Remark}
\newtheorem*{notes*}{Remarks}
\newtheorem*{ex*}{Example}
\newenvironment{proof}{\par\noindent{\bf Proof.}}{$\square$\par\bigskip}
\begin{document}

\title[Monolithic modules over Noetherian Rings]
      {\Large \bf Monolithic modules over Noetherian Rings}
\author[Paula A.A.B. Carvalho and Ian M. Musson]
       {Paula A.A.B. Carvalho \\ Ian M. Musson }
\thanks{The first author was partially supported by Centro de Matem\'atica da Universidade do Porto, financed by FCT
(Portugal) through the programs POCTI and POSI with national and
European community structural funds.}
\address{Departamento de Matem\'atica\\
         Faculdade de Ci\^encias\\
         Universidade do Porto\\
         Rua do Campo Alegre 687, 4169-007 Porto, Portugal}
\email{pbcarval@@fc.up.pt}
\address{University of Wisconsin-Milwaukee\\
         PO Box 413\\
         Milwaukee, WI 53201 U.S.A.}
\email{musson@@csd.uwm.edu}

\subjclass{16E70, 16P40}
\date\today

\begin{abstract}
We study finiteness conditions on essential extensions of simple
modules over the quantum plane,  the quantized Weyl algebra and Noetherian down-up
algebras. The results achieved improve the ones obtained in
\cite{CarvalhoLompPusat} for down-up algebras.
\end{abstract}
\maketitle

\section{Introduction}
In this paper we consider the following property of a Noetherian ring $A$:
\begin{center}$(\diamond)\:\:$ Injective hulls of simple left $A$-modules are locally Artinian.\end{center}
Property $(\diamond)$ has an interesting history.  Indeed it was
shown by A.V. Jategaonkar \cite{Jategaonkar74} and J.E. Roseblade
\cite{Roseblade} that if $G$ is a polycylic-by-finite  group, then
the group ring $RG$ has property $(\diamond)$ whenever $R$ is the
ring of integers, or is a field that is algebraic over
a finite field see also \cite{Passman85} Section 12.2.  This
result is the key step in the positive solution of a problem of P.
Hall, \cite{Hall}. P. Hall asked whether every finitely generated
abelian-by-(polycylic-by-finite) group is residually finite. In
\cite{Roseblade} a module $M$ is called {\it
monolithic} if it has a unique minimal submodule. 
  Note that $A$
has property $(\diamond)$ if and only if every finitely generated
monolithic $A$-module is Artinian.  We have revived the older,
shorter terminology in the title of this paper. A.V. Jategaonkar
showed in \cite{Jategaonkar_conj} that a fully bounded Noetherian
ring $R$ satisfies property $(\diamond)$, and used this fact to
show that
Jacobson's conjecture holds for $R$. 

Returning to the group ring situation, suppose $G$ is a polycylic-by-finite  group, $K$  is a field, $A = KG$
 and $E$ is the injective hull of a finite-dimensional $A$-module.
It was shown by K.A. Brown, \cite{Ken81} that if $K$ has
characteristic zero, then $E$ is locally finite dimensional, and
this fact and some  Hopf algebra theory was used by S. Donkin to
show that $E$ is in fact Artinian \cite{Donkin81}. Note that
injective comodules over coalgebras are always locally finite
dimensional. Similar results were obtained when  $K$ has positive
characteristic by the second author \cite{Musson80} using methods
that more closely follow the argument used for commutative rings
in \cite{SV}.

The first examples of Noetherian rings for which property
$(\diamond)$ does not hold were given by the second author for
group algebras and enveloping algebras, see \cite{M},
\cite{Musson_Counter} and \cite[Example 7.15]{CH}. On the other
hand R.P. Dahlberg \cite{Dahlberg89} showed that injective hulls
of simple modules over $U({\frak sl}_2)$ are locally Artinian.

Interest in property $(\diamond)$ was renewed recently by a
question of P.F. Smith. Smith asked whether Noetherian down-up
algebras have property $(\diamond).$ Given a field $K$ and
$\al,\gb,\gam$ arbitrary elements of $K$, the associative algebra
$A=A(\al,\gb,\gam)$ over $K$ with generators $d,u$ and defining
relations
               $$(R1)\qquad d^2u=\al dud+\gb ud^2+\gam d$$
               $$(R2)\qquad du^2=\al udu+\gb u^2d+\gam u$$
is called a down-up algebra. Down-up algebras were introduced by G. Benkart and
T. Roby \cite{BenkartRoby}. 
In \cite{KirkmanMussonPassman} it is shown that $A(\al,\gb,\gam)$ is Noetherian if and only if $\gb\neq 0$.
Some examples of down-up algebras with property $(\diamond)$ were given in \cite{CarvalhoLompPusat}.
In this paper we study Noetherian down-up
algebras having property $(\diamond),$ and in particular we exhibit the first examples that do not have this property.
These examples are constructed using the fact that when $\gam = 0$, (resp. $\gam = 1$) the quantum plane,  (resp. the quantized Weyl algebra) is an image of $A.$

An interesting class of down-up algebras arises in the following way. For $\eta\neq 0$, let $A_{\eta}$ be the algebra with generators $h, e, f$ and relations
$$he-eh=e,$$
$$hf-fh=-f,$$
$$ef-\eta fe=h.$$
Then $A_{\eta}$ is isomorphic to a down-up algebra $A(1+\eta, -\eta, 1)$ and conversely any down-up algebra $A(\alpha, \beta, \gamma)$
with $\beta\neq 0\neq \gamma$ and $\alpha+\beta=1$ has the above form. Note that $A_1\simeq U({\fsl}(2))$ and $A_{-1}\simeq U(osp(1,2))$.
When $\eta$ is not a root of unity, we have been unable to determine whether property $(\diamond)$ holds. However we resolve the issue in all other cases.
Our main result is as follows.
\begin{theorem} \label{c2}Suppose that  $A= A(\alpha, \beta, \gamma)$ is  a Noetherian down-up algebra, and assume that if $\al +\gb = 1,$ and $\gam \neq 0,$ then $\gb$ is a root of unity.
Then any finitely generated monolithic $A$-module is Artinian if
and only if the roots of $X^2-\alpha X-\beta$ are roots of unity.
\end{theorem}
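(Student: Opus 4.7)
The plan is to split into the two implications and reduce each, when possible, to facts about quotients of $A$ that are established earlier in the paper. Write $\gl_1, \gl_2$ for the roots of $X^2 - \al X - \gb$; since $\gb \neq 0$, both are nonzero, with $\gl_1 + \gl_2 = \al$ and $\gl_1\gl_2 = -\gb$.

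For the ``only if'' direction I would use the standard fact that property $(\diamond)$ descends to factor rings: a finitely generated monolithic $A/I$-module is also a finitely generated monolithic $A$-module, with Artinianness agreeing over the two rings. Thus it suffices to display, for each parameter regime in which some $\gl_i$ fails to be a root of unity, a quotient of $A$ violating $(\diamond)$. A direct calculation from $(R1),(R2)$ shows that when $\gam = 0$ the ideal generated by $du - \gl_i ud$ cuts out a quantum plane $\K_q[u,d]$ with $q = \gl_i$, and when $\gam \neq 0$ and $\al + \gb \neq 1$ the analogous construction yields a quantized Weyl algebra quotient whose parameter is again determined by $\gl_i$. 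The failure of $(\diamond)$ for these algebras when $q$ is not a root of unity is exactly what has been established in the earlier sections of the paper, so the failure transfers to $A$. The excluded regime $\al + \gb = 1$, $\gam \neq 0$ is the $A_\eta$ family with $\eta = -\gb$, in which the roots are $1$ and $\eta$; the standing hypothesis that $\gb$ be a root of unity in this case forces both roots to be roots of unity, so the ``only if'' direction is vacuous there.

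For the ``if'' direction, assume $\gl_1, \gl_2$ are roots of unity, and split again on $\gam$ and $\al + \gb$. When $\gam = 0$, or $\gam \neq 0$ with $\al + \gb \neq 1$, the elements $d$, $u$, and $\omega_i := du - \gl_i ud$ satisfy relations, obtained by direct computation from $(R1),(R2)$, of the form $d\omega_i = \gl_{3-i}\omega_i d + \gam d$ and a symmetric identity with $u$; once the $\gl_i$ are roots of unity, sufficiently high powers of $d$, $u$, $\omega_1$, $\omega_2$ generate an affine central subalgebra over which $A$ is a finite module, so that $A$ is a Noetherian PI ring. Such a ring is fully bounded Noetherian, whence Jategaonkar's theorem cited in the introduction supplies $(\diamond)$. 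In the remaining case $A = A_\eta$ with $\eta$ a root of unity, $A$ is not PI (for instance $A_1 \iso U(\fsl(2))$), and I would invoke the analogue of Dahlberg's theorem for $A_\eta$ that must be proved as a separate input in the body of the paper.

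The principal obstacle is the $A_\eta$ subcase of the ``if'' direction: $A_\eta$ is in general neither PI nor FBN, so one must give a direct, Dahlberg-style analysis of finitely generated monolithic $A_\eta$-modules, bounding their length via the weight-space decomposition induced by $h$ and checking that the analysis survives passage from $\eta = 1$ to arbitrary roots of unity $\eta$. A secondary technical issue is the degenerate subcase $\gl_1 = \gl_2$ (i.e.\ $\al^2 + 4\gb = 0$), where the two eigenvectors $\omega_1, \omega_2$ coincide and the affine central subring in the PI step must be produced by a separate construction.
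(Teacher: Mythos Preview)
Your ``only if'' direction matches the paper's: the standing hypothesis rules out the $A_\eta$ family with $\eta$ not a root of unity, and Lemma~\ref{homomorphic} together with Theorems~\ref{theoremB} and~\ref{c1} does the rest. That part is fine.

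The ``if'' direction has a genuine gap, and it lies precisely in the subcase you flag as a ``secondary technical issue.'' When $\gl_1=\gl_2=r$ with $r$ a root of unity, the algebra $A$ is \emph{not} PI in characteristic~$0$, so no ``separate construction'' of a large central subalgebra can succeed. The reason is the Jordan block: in the generalized Weyl algebra presentation one has $\sigma(w_1)=rw_1$ but $\sigma(w_2)=rw_2+w_1$, and iterating gives $d^Nw_2 = r^Nw_2d^N + Nr^{N-1}w_1d^N$. No power of $d$ (or $u$) commutes with $w_2$, hence with $u$ (or $d$), in characteristic~$0$. The easiest instances are $A(2,-1,0)\cong U(\text{Heisenberg})$ and $A(-2,-1,0)$, both covered by your case ``$\gam=0$'' yet neither PI. Thus your PI argument only actually works when the roots are \emph{distinct} roots of unity and (if $\gam\neq 0$) neither equals~$1$.

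The paper's case split is organized differently from yours and this is why. The distinct-root case (including $A_\eta$ with $\eta$ a root of unity) and the double-root-$1$ case are handled by citing \cite[Corollary 3.2]{CarvalhoLompPusat}; no Dahlberg-style analysis of $A_\eta$ is carried out here. The genuinely new work in this paper is Theorem~\ref{rootof1}, which treats the equal-root case $r=s\neq 1$ by an argument that has nothing to do with $A$ being PI: one shows that $A/Aw$ is PI (it is a quantum plane or quantized Weyl algebra at a root of unity), and that the localization $A_w$ has Krull dimension~$2$ (Proposition~\ref{nelt}), then splits a given monolithic module according to whether $w$ kills its socle. So the case you dismissed as secondary is exactly the one requiring the main new input, and the case you singled out as the principal obstacle ($A_\eta$) is in fact absorbed into an earlier reference.
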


We remark that a characterization of property
$(\diamond)$ for Noetherian rings remains rather elusive.  Even  a comparison of the examples for the quantum plane and quantized Weyl algebra does not seem easy to make,
see Section \ref{qwa} for further remarks.  Thus it seems worthwhile to study examples of rings with low GK-dimension, and down-up algebras provide an interesting test-case for property
$(\diamond)$.  Much current research in non-commutative algebraic geometry also centers on low dimensional algebras, and in particular down-up algebras are studied as non-commutative threefolds by Kulkarni in
\cite{K2008}.

We thank Kenny Brown for his comments on a preliminary version of this paper.
\section{Preliminaries.} {\label{I201}} If $r \in K$ and $x, y$ are elements of a $K$-algebra we set
$[x,y]_r  = xy - ryx.$  Throughout this paper we will assume the equation $0=\lambda^2-\al\lambda-\gb$ has roots $r,s \in K$. Suppose $q \in K$ is nonzero and consider the algebra $B(q) = K[a,b]$ generated by $a, b$ subject to the relation $ab = qba.$  In addition let $C(q) = K[a,b]$ denote the algebra generated by $a, b$ subject to the relation $ab - qba =1.$
The algebras  $B(q), C(q)$ are known as the {\it coordinate algebra of the quantum plane} and the  {\it quantized Weyl algebra} respectively.
\bl \label{homomorphic}  \begin{itemize}\item[{}]
\item[(a)] The algebra $B(r)$ is a homomorphic image of $A=A(\al,\gb,0)$. \item[(b)] If $s\neq 1$ the algebra $C(r)$ is a homomorphic image of $A=A(\al,\gb,1)$.
\end{itemize}
\el \bpf  \; If $\gam = 0,$ relations (R1) and (R2) can be written in the form
$$[d,[d,u]_r]_s = [[d,u]_r,u]_s = 0.$$ Thus both relations follow from the relation $[d,u]_r = 0,$ so there is a map from  $A=A(\al,\gb,\gam)$ onto $B(r)$ sending $d$ to $a$ and $u$ to $b$.\\
\\
On the other hand if $\gam\neq 0$, we can assume $\gamma =1$. If $s\neq 1$, let $t\in K$ be such that $t(s-1)=1$. Relations $(R1)$ and $(R2)$ can now be written in the form
$$[d,[d,u]_r -t]_s = [[d,u]_r-t,u]_s = 0.$$
Since $[ta,b]_r-t=0$ in $C(r)$, there is an homomorphism from $A$ onto $C(r)$ sending $d$ to $ta$ and $u$ to $b$.
\epf
The above Lemma will be used, together with the results of the next two subsections, to produce examples of down-up algebras that do not satisfy property $(\diamond)$.  Note however that if exactly one of the roots of the Equation $X^2-\alpha X-\beta$ is equal to 1, the Lemma tells us only that the first Weyl algebra is  a homomorphic image of $A=A(\al,\gb,1)$. In  this case the Lemma is of no use in constructing counterexamples.
\section{The Coordinate Ring of the Quantum Plane.}\label{crqp}
If  $q$ is an element of $K$ which is not a root of unity we show that $B=B(q)$ does not satisfy property $(\diamond)$. Consider the left ideals $I = B(ab-1)(a-1) \subset J = B(a-1)$, and set $M = B/I, V = J/I$ and $W = B/J.$ Then there is an exact sequence $$0 \lra V \lra M \lra W \lra 0.$$
\bt \label{theoremB} \begin{itemize}\item[{}]
\item[{(a)}]  The module $M$ is a non-Artinian essential extension of the simple submodule $V$.  \item[{(b)}] The submodules of $W$ are linearly ordered by inclusion, and are pairwise non-isomorphic.
\end{itemize}
 \et \bpf \;
 {\it Step 1: V is simple.} Clearly $V$ is generated by the element $v_0 = (a-1)+I$.  For $n\geq 0,$ set
 $$v_n = b^nv_0, \quad v_{-n} = a^nv_0.$$ Then using $abv_0 = v_0,$ we obtain for all $n \geq 0$,
\begin{equation}
av_{n+1} = q^nv_n, \quad bv_{-n-1} = q^{-n-1}v_{-n}.
\label{eq1}
\end{equation}
\\
Furthermore for all integers $n,$
\begin{equation}
abv_{n} = q^nv_n.
\label{eq2}
\end{equation}
It is easy to see that $V$ is spanned by the set $X = \{v_n|n\in \mathbb{Z}\}$, and it follows from  equation  (\ref{eq2}) that the set $X$ is linearly independent. Equation  (\ref{eq2}) also implies that any submodule of $V$ is spanned by a subset of $X.$  Then simplicity of $V$ follows from equation  (\ref{eq1}). \\
\\
{\it Step 2: Proof of (b).} Clearly $W$ is generated by the element $w_0 = 1+J$ and spanned over $K$ by the set $Y= \{w_n|n\geq 0\},$
where  $w_n = b^nw_0$.
Furthermore for all $n \geq 0,$
\begin{equation}\label{eq2*}
aw_{n} = q^nw_n.
\end{equation}
As in the proof of Step 1, $Y$ is linearly independent. Equation  (\ref{eq2*}) also implies that any submodule of $W$ is spanned by a subset of $Y.$ Now for all $n \geq 0$ set
$$W_n = span\{w_m|m\geq n\} = Bw_n.$$ Consideration of the action of $b$ now shows that a complete list of non-zero submodules of $W$ is
$$W = W_0 \supset W_1 \supset W_2 \ldots.$$ To  complete the proof of (b) we observe that $a$ acts as multiplication by $q^n$ on  the unique simple quotient of $W_n.$\\
\\
{\it Step 3: There is no element $v \in V$ such that $(a-q^m)v = v_m.$} If $v \in V$ is non-zero we can write $v$ as a linear combination of basis elements, $v = \sum_{i=r}^s \gl_iv_i,$ where $\gl_r, \gl_s$ are nonzero.  Then we set $|v| = s-r.$  From equations (\ref{eq1}), it follows that $|(a-q^m)v| = s-r+1.$  Clearly this gives the assertion.  \\
\\
{\it Step 4: Proof of (a).} Set $m_n = b^n + I$ for $n \geq 0.$ Then $m_n$ maps onto $w_n$ under the natural map $M \lra W.$  Thus the set  $\{v_n, m_p|n, p \in {\mathbb{Z}}, n \ge 0\}$ is a basis for $M.$ Since $am_0 = m_0 + v_0,$ it
follows that \begin{eqnarray}\label{}
am_n &=& q^nb^nam_0\nonumber \\ &=& q^n(m_n+v_n).\nonumber
\end{eqnarray}
Suppose that $m = \sum_{i \in I} \gl_im_i +v$ is a nonzero element of $M.$ We assume that $v \in V$, $|I|$ is non-empty, and that $\gl_i$ is a non-zero scalar for all $i \in I.$  Then we show by induction on $|I|$ that $Bm \cap V$ is non-zero.   Suppose that $n \in I$, and without loss that $\gl_n = 1.$  If $|I|= 1,$ then  $Bm\cap V$ contains \begin{eqnarray}\label{eq3}
(a- q^n)(m_n+v)&=& q^nv_n +
(a-q^n)v,\nonumber
\end{eqnarray} and by Step 3, this is non-zero.
 Similarly if $|I| >1$, then $Bm$ contains
$(a- q^n)m$ and we have $(a- q^n)m = \sum_{j \in J} \mu_j m_j +v'$  with $J = I \backslash \{n\}, v' \in V,$
and $\mu_j \neq 0$ for $j \in J.$  Thus the result follows by induction.
\hfill \epf
\section{The Quantized Weyl Algebra}\label{qwa}
Throughout this section assume that $q$ is an element of $K$ which is not a root of unity. We show that the quantized Weyl algebra $C = C(q)$ does not have property $(\diamond)$.   We begin with some comments which may serve to motivate our construction. Observe that in  Theorem \ref{theoremB}, the submodules of $W = Bw_0$ have the form $Bn^kw_0$ for some normal element $n$ of $B.$ An analogous statement holds for the Example from \cite{CH} mentioned  in the Introduction.  Now the element $n = ab - ba \in C$ is normal, and we can in fact repeat this strategy.  Note however that $n$ has degree two with respect to a natural filtration on $C$, whereas in the earlier examples the normal element had degree one.  For this reason, we have not attempted to give a more unified treatment of our results.

It is reasonable to look for a $C$-module $W$ such that $W = K[n]$ as a $K[n]$-module with $(n^{i})$ a submodule of $W$ for each $i.$  Note that $\bar C = C/Cn \simeq K[a^{\pm 1}]$, and that if such a module $W$ exists, then each factor $(n^{i})/((n^{i+1})$ is a one-dimensional $\bar C$-module.
 Based on these considerations, it is not hard to determine the possibilities for $W$, and with a little experimentation, arrive at the required nonartinian monolithic module.

Consider the $K$-vector space $M$ with basis $\{v_i, w_i: i,j\in
\N\}$, and let $V={\mbox span}_K\{v_i: i\in \N\}$, $W = M/V.$ Define linear operators $a$ and $b$ on $V$ by
\begin{equation}\label{C1}
 av_0=0
\end{equation}
\begin{equation}\label{C2}
 av_n=\displaystyle\frac{q^n-1}{q-1}v_{n-1}
\end{equation}
\begin{equation}\label{C3}
 bv_n=v_{n+1}
\end{equation}
Next extend the action of $a$ and $b$  to $M$ by setting
\begin{equation}\label{C4}
 aw_n=q^n(w_n+w_{n+1})
\end{equation}
and
\begin{equation}\label{C5}
 bw_n=\frac{q^{-n}}{1-q}w_n+(-1)^nv_0.
\end{equation}
\\
We then have
\begin{equation}
(ab-ba)w_n=-\frac{1}{q}w_{n+1},
\end{equation}
\begin{equation}
(ab-qba)w_n=w_n
\end{equation}
It is now easy to see that $M$ is a $C$-module, and  $V$ is a submodule of $M$.

\begin{lemma}\label{c0}
The $C$-module $V$ is simple.
\end{lemma}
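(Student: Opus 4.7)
The plan is to show that any nonzero submodule $U \subseteq V$ must contain every basis vector $v_n$, which forces $U = V$. The argument will proceed in two steps: first, use the action of $a$ to shrink an arbitrary nonzero element of $U$ down to a nonzero scalar multiple of $v_0$; second, use the action of $b$ to climb back up the basis from $v_0$.

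For step one, I would pick any nonzero $u \in U$ and write it as $u = \sum_{i=r}^s \lambda_i v_i$ with $\lambda_s \neq 0$. From (\ref{C1}) and (\ref{C2}),
\[
au \;=\; \sum_{i \geq 1} \lambda_i \frac{q^i-1}{q-1}\, v_{i-1},
\]
so the top index drops from $s$ to $s-1$, while any $v_0$-term of $u$ is annihilated. The crucial input is the standing hypothesis that $q$ is not a root of unity: this makes $q^i - 1 \neq 0$ for every $i \geq 1$, so at each stage the new leading coefficient remains nonzero. Iterating, $a^s u$ is a nonzero scalar multiple of $v_0$, which shows $v_0 \in U$.

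For step two, (\ref{C3}) gives $b^n v_0 = v_n$ for every $n \geq 0$, so $U$ contains the entire spanning set $\{v_n : n \in \mathbb{N}\}$ and therefore equals $V$. I do not anticipate any serious obstacle: the whole content of the lemma is the observation that the non-root-of-unity hypothesis keeps the scalars $\frac{q^i-1}{q-1}$ away from zero, so the "lower-by-$a$, raise-by-$b$" strategy works uniformly on any nonzero submodule.
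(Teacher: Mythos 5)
Your proof is correct and follows essentially the same route as the paper: the paper likewise uses equation (\ref{C2}) to lower any nonzero element of a submodule to a nonzero multiple of $v_0$ (the non-root-of-unity hypothesis guaranteeing the coefficients $\frac{q^i-1}{q-1}$ are nonzero) and then observes $V=Cv_0$. Your write-up just makes explicit the induction on the top index and the role of $b$, which the paper leaves implicit.
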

\begin{proof}
Since any element of $V$ is of the form $v=a_0v_0+a_1v_1+\ldots+a_nv_n$ for some $a_i\in K$, by equation (\ref{C2}) we deduce that $v_0\in Cv$ for any nonzero $v\in V$. Hence $V$ is simple and also $V=Cv_0$.
\end{proof}

\begin{theorem} \label{c1} \bi \item[{}] \item[{(a)}]  The module $M$ is a non-Artinian essential extension of the simple submodule $V$.  \item[{(b)}] The submodules of $W$ are linearly ordered by inclusion, and are pairwise non-isomorphic.\ei
\end{theorem}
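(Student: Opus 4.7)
The plan is to follow the structure of the proof of Theorem \ref{theoremB}, replacing the operator $a-q^n$ by $b-\frac{q^{-n}}{1-q}$; this exchange is forced because in the present setup it is $b$ that acts diagonally on the quotient $W = M/V$, while continuing to act as a shift on $V$. As a first step I would record the action of $a,b$ on $W$: writing $\overline{w_i}$ for the image of $w_i$, equations (\ref{C4})--(\ref{C5}) descend to
\[
a\overline{w_i}=q^i(\overline{w_i}+\overline{w_{i+1}}),\qquad b\overline{w_i}=\frac{q^{-i}}{1-q}\overline{w_i}.
\]
Defining $W_n=\mathrm{span}_K\{\overline{w_i}:i\ge n\}$, the $C$-submodule property is immediate, and the identity $(a-q^n)\overline{w_n}=q^n\overline{w_{n+1}}$ shows $W_n=C\overline{w_n}$.

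For part (b), I would take an arbitrary nonzero submodule $N\subseteq W$, let $r$ be the least integer for which some $w=\sum_{i=r}^{s}\lambda_i\overline{w_i}\in N$ has $\lambda_r\neq 0$, and exploit that the $b$-eigenvalues $\frac{q^{-i}}{1-q}$ on $W$ are pairwise distinct (using that $q$ is not a root of unity) to apply $\prod_{i=r+1}^{s}\bigl(b-\frac{q^{-i}}{1-q}\bigr)$ to $w$, obtaining a nonzero scalar multiple of $\overline{w_r}$ inside $N$. This yields $W_r\subseteq N$, and minimality of $r$ supplies the reverse inclusion, so $N=W_r$. The nonzero submodules of $W$ are therefore $W_0\supsetneq W_1\supsetneq\cdots$, and since the unique simple quotient $W_n/W_{n+1}$ is one-dimensional with $a$ acting as $q^n$ and the scalars $q^n$ are pairwise distinct, the $W_n$ are pairwise non-isomorphic.

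For part (a), the chain from (b) shows $M$ is not Artinian. For essentiality, I would take a nonzero $m\in M$, write $m=\sum_{i\in I}\lambda_i w_i+v$ with $I$ finite, each $\lambda_i\neq 0$, and $v\in V$, and induct on $|I|$. The case $|I|=0$ is immediate. For $|I|\ge 1$, picking any $n\in I$ and setting $m'=\bigl(b-\frac{q^{-n}}{1-q}\bigr)m$, a direct use of (\ref{C3})--(\ref{C5}) should yield
\[
m'=\sum_{i\in I\setminus\{n\}}\lambda_i\,\frac{q^{-i}-q^{-n}}{1-q}\,w_i+v'
\]
for some $v'\in V$, so the $w$-support drops by one. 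If $m'\neq 0$, induction closes the argument since $Cm'\subseteq Cm$.

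The main obstacle, paralleling Step 3 in the proof of Theorem \ref{theoremB}, is ruling out $m'=0$. This is free when $|I|>1$, since the formula above already exhibits a nonzero $w$-component; the delicate case is $|I|=1$, where $m'$ lies entirely in $V$ and direct inspection is required. Here I would write $v=\sum_k\mu_k v_k$ with finite support, match coefficients in $m'=0$, and derive both $\mu_0=\lambda_n(-1)^n(1-q)q^n\neq 0$ and the recurrence $\mu_{k+1}=(1-q)q^n\mu_k$, which together force $\mu_k\neq 0$ for all $k\ge 0$ and contradict the finite support of $v$. Once this is established the induction goes through and (a) follows.
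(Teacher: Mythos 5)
Your proposal is correct and takes essentially the same approach as the paper: the diagonal action of $b$ on $W=M/V$ classifies the submodules as the chain $W_0\supset W_1\supset\cdots$, and essentiality is obtained by repeatedly applying $b-\frac{q^{-n}}{1-q}$ to strip off $w$-components, with the delicate single-$w_n$ case disposed of by the same degree-raising observation (your recurrence for the $\mu_k$ is just an expanded form of the paper's remark that the coefficient of $v_{m+1}$ in $\bigl(b-\frac{q^{-n}}{1-q}\bigr)v$ is nonzero). The only cosmetic difference is that you distinguish the simple quotients $W_n/W_{n+1}$ by the action of $a$ rather than $b$; both work since $q$ is not a root of unity.
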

\begin{proof}
First we prove (b). By equation (\ref{C5}) any submodule of $W$ is spanned by a subset of $\{w_n: n\in {\N}_0\}$. For any $n\in\N$ set
$W_n=span\{w_m:m\geq n\}.$ Consideration on the actions of $a$ and $b$ shows that the complete list of non-zero submodules of $W$ is
$$W=W_0\supset W_1\supset W_2\supset\ldots$$
Since $b$ acts as multiplication by $\displaystyle\frac{q^{-n}}{1-q}$ on the unique simple quotient of $W_n$, the proof of (b) is complete.

Next we prove  (a). By Lemma \ref{c0}, $V$ is simple and by (b) $M$ is not Artinian. The rest of the proof consists of three steps.

(i) Given $n\in \N$, by (\ref{C5}),
\begin{equation}
\left(b-\frac{q^{-n}}{1-q}\right)w_n=(-1)^nv_0\in V\cap Cw_n,
\end{equation}
so $Cw_n\cap V\neq 0$.

(ii)
For any $n\in\N$, $C(w_n+v)\cap V\neq 0$.
Indeed
\begin{equation}
(b-\frac{q^{-n}}{1-q})(w_n+v)=(-1)^nv_0+(b-\frac{q^{-n}}{1-q})v.
\end{equation}
So we must show that we can not have $v\in V\backslash\{0\}$ such
that
$$\left(b-\frac{q^{-n}}{1-q}\right)v=(-1)^{n+1}v_0.$$
This follows since if
$v=\lambda_0v_0+\ldots+\lambda_mv_m$, for some $\lambda_0,\ldots,\lambda_m\in K$ with $\lambda_m \neq 0$, then  the coefficient of $v_{m+1}$ in
$(b-\frac{q^{-n}}{1-q})v$ is non-zero.

(iii) Let $m\in M\backslash V$. We show that $Cm \cap V\neq 0$. This will complete the proof. Without loss of generality we
can write $m=w_n+\lambda_{n-1}w_{n-1}+\ldots+\lambda_0w_0+v$ for
some $v\in V$ and $\lambda_0,\ldots,\lambda_{n-1}\in K$. 
Then
$\left(b-\frac{q^{-n}}{1-q}\right)m$ is a linear combination of
$w_{n-1},\ldots,w_0,$ and the $v_i$ with $ i\in\N$. Either we are in case (i) or
(ii) or if not, we apply $\left(b-\frac{q^{-k}}{1-q}\right)$ for a suitable
$k$ and repeat the process.
\end{proof}

\section{A Positive Result.}

Let $A=A(\alpha, \beta,\gamma)$ be a down-up algebra and set
$f(x)=x^2-\alpha x-\beta$. Suppose that $f(x)=(x-r)^2$ where $r$
is a primitive $n^{th}$ root of unity. Thus $\alpha=2r$ and
$\beta=-r^2$. The goal of this section is to prove

\begin{thm}\label{rootof1}
A finitely generated essential extension of a simple $A$-module is
Artinian.
\end{thm}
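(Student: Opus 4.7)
\bigskip
\noindent\textbf{Proof proposal.} The plan is to show that under the hypothesis $f(X)=(X-r)^2$ with $r$ a primitive $n$-th root of unity, the algebra $A$ is (in most cases) module-finite over a commutative Noetherian central subring, hence a Noetherian PI-ring and therefore fully bounded; Jategaonkar's theorem recalled in the introduction then gives property $(\diamond)$. Since $(\diamond)$ is equivalent to the statement that every finitely generated monolithic $A$-module is Artinian, and a finitely generated essential extension of a simple module is monolithic (the simple submodule is its socle, hence its unique minimal submodule), this yields the theorem.

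I would begin by introducing $z := du - rud \in A$. Using $\alpha = 2r$ and $\beta = -r^2$, the defining relations $(R1)$ and $(R2)$ become
\[
[d,z]_r = \gamma d, \qquad [z,u]_r = \gamma u.
\]
Suppose first that $r \neq 1$. Then the shift $z' := z + \gamma/(r-1)$ satisfies $dz' = rz'd$ and $z'u = ruz'$, so $z'$ is normal in $A$; and since $r^n = 1$, the power $(z')^n$ is central. Using the PBW-type basis $\{u^i(z')^jd^k\}$ of $A$ together with the relation $du - rud = z' - \gamma/(r-1)$, the standard quantum-algebra-at-a-root-of-unity argument shows that $A$ is module-finite over a commutative Noetherian central subring containing $(z')^n$, $d^n$ and $u^n$. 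Hence $A$ is PI and fully bounded, and Jategaonkar's theorem applies.

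When $r = 1$ the shift is unavailable and one must argue case by case. If $\gamma \neq 0$, rescale so that $\gamma = 1$; the relations become $[d,[d,u]] = d$ and $[[d,u],u] = u$, which (with $h = [d,u]$) are the $\fsl(2)$-relations up to an innocuous rescaling of $h$, so $A \cong U(\fsl(2))$ and Dahlberg's theorem \cite{Dahlberg89} supplies property $(\diamond)$. If $\gamma = 0$, then $z = [d,u]$ is central and $A$ is the enveloping algebra of the $3$-dimensional Heisenberg Lie algebra; here I would stratify simple $A$-modules by their central character $\lambda$ at $z$ (the factor $A/(z-\lambda)$ being a commutative polynomial ring when $\lambda = 0$ and the first Weyl algebra when $\lambda \neq 0$) and assemble the injective hull of each simple by exploiting the $K[z]$-grading on essential extensions.

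The main obstacle is this last Heisenberg sub-case, where $A$ is not a PI-ring and the fully-bounded shortcut is unavailable; the delicate step is controlling injective hulls of simple modules over the first Weyl algebra and patching them along the central variable $z$. Elsewhere the argument is a routine application of the standard structure theory of quantum-type algebras at roots of unity together with Jategaonkar's FBN theorem.
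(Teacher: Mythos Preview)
Your central claim in characteristic $0$ with $r\neq 1$ --- that $A$ is module-finite over a commutative central subring containing $(z')^n$, $d^n$, $u^n$ --- is false, and this is the heart of the matter. The element $z'=w=du-rud+\varepsilon$ is indeed normal with $dw=rwd$, $wu=ruw$, so $w^n$ is central; but $d^n$ and $u^n$ are not. A direct induction from $du=rud+(w-\varepsilon)$ and $dw=rwd$ gives
\[
d^{k}u \;=\; r^{k}ud^{k}+k\,r^{k-1}wd^{k-1}-[k]_{r}\,\varepsilon\,d^{k-1},
\qquad [k]_r=1+r+\cdots+r^{k-1}.
\]
Setting $k=n$ with $r^n=1$, $r\neq 1$ (so $[n]_r=0$) yields $d^{n}u-ud^{n}=n\,r^{-1}wd^{n-1}$, which is nonzero in characteristic $0$. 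Thus $A$ is not PI in this case (its center is essentially $K[w^n]$), and the ``standard quantum-algebra-at-a-root-of-unity argument'' you invoke does not apply. Note that this same computation explains why the paper's positive-characteristic argument uses $d^{np}$ and $u^{np}$ rather than $d^n$ and $u^n$.

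The paper's proof in characteristic $0$, $r\neq 1$, is genuinely different and more delicate. It exploits the normal element $w$ in two ways: the quotient $A/Aw$ is a quantized Weyl algebra or quantum plane at a root of unity, hence PI, which handles simples with $wV=0$; for simples with $wV\neq 0$ one has $(w^n-\lambda)V=0$ for some $\lambda\neq 0$, and one passes to the localization $A_w$. The key step, carried out via the generalized Weyl algebra presentation and the Bavula--van Oystaeyen Krull-dimension theorem, is that $\Kdim A_w=2$; the relevant prime quotient then has Krull dimension $1$, where an earlier result of Musson applies. Your treatment of the cases $\operatorname{char}K=p$ and $r=1$ is on the right track (and for $r=1$, $\gamma=0$ the paper simply cites \cite{CarvalhoLompPusat} rather than arguing directly), but the main case requires the localization/Krull-dimension argument, not a PI shortcut.
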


Suppose first that  $char(K)=p$,  and let $Z'=[d^{np}, u^{np}, (du-rud+\frac{\gamma}{r-1})^n]$.  Using \cite[Theorem 4.4]{Hildebrand} and \cite[Lemma 2.2]{Zhao}, it is easy to see that $A$ is finitely generated over the central subalgebra $Z'$. Therefore $A$ is PI and property $(\diamond)$ holds. For the rest of this section we assume that $char(K)=0$.

We denote the Krull dimension of a ring $B$ by ${\Kdim}\,B$.
If $r = \gam = 1,$ then $A$ is isomorphic to the enveloping algebra of the Lie algebra $\fsl$(2),
and  Theorem \ref{rootof1} holds by \cite{Dahlberg89}. The proof depends on the fact that ${\Kdim}\,A = 2,$ and does not immediately adapt to our situation.  A key step in our proof is the fact that a certain localization of $A$ has Krull dimension 2, see Proposition \ref{nelt}.\\
 \\
We establish some preliminaries. By \cite[Corollary 3.2]{CarvalhoLompPusat} we may assume that $r\neq 1$. Hence case 3 of
\cite[\S 1.4]{CarvalhoMusson} holds and we set
$$w_1=(2\beta+\alpha)ud+(\alpha-2)du+2\gamma;$$
$$w_2=2du-2ud$$
so that $\sigma(w_1)=rw_1$ and $\sigma(w_2)=rw_2+w_1$. Set
$w=w_1/2(r-1)=-rud+du+\varepsilon$ where $\varepsilon=\gamma/(r-1)$.

\begin{lem}
$\overline{A}=A/Aw$ is a PI algebra.
\end{lem}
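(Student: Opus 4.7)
The plan is to identify $\overline A$ as a surjective homomorphic image of either the quantum plane $B(r)$ or the quantized Weyl algebra $C(r)$, and then invoke the fact that these algebras are module-finite over a central polynomial subring once $r$ is a root of unity, hence PI.

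First I would verify that $Aw$ is in fact a two-sided ideal, i.e.\ that $w$ is normal in $A$. A direct calculation using (R1) and (R2), together with the identity $\var(r-1)=\gam$, yields $wu = r\,uw$ and $dw = r\,wd$, so $wA \subseteq Aw$ and $\overline A$ is a genuine quotient ring. The relation $w=0$ in $\overline A$ then reduces to $du - rud = -\var$. Since $d$ and $u$ generate $\overline A$, the universal property of the free $K$-algebra on two generators produces a surjective $K$-algebra homomorphism from $T:=K\langle d,u\rangle/(du - rud + \var)$ onto $\overline A$.

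Next I would identify $T$: when $\gam=0$ we have $\var=0$ and $T=B(r)$; when $\gam\neq 0$ we have $\var\neq 0$, and rescaling $d\mapsto -d/\var$ identifies $T$ with $C(r)$. Thus in either case $\overline A$ is a homomorphic image of one of the algebras appearing in Lemma \ref{homomorphic}.

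Finally I would check that $B(r)$ and $C(r)$ are PI. A routine induction on $k$ starting from $du = rud + c$ (with $c=0$ or $c=1$) gives
\[ d^k u \;=\; r^k\, u d^k \;+\; c\,[k]_r\, d^{k-1}, \qquad [k]_r := \frac{r^k-1}{r-1}. \]
Because $r$ is a primitive $n$-th root of unity with $n\geq 2$ (as $r\neq 1$), $[n]_r=0$, so $d^n$ commutes with $u$; by symmetry $u^n$ commutes with $d$. Hence $K[u^n,d^n]$ lies in the centre, and $B(r),C(r)$ are spanned over $K[u^n,d^n]$ by the $n^2$ monomials $\{u^i d^j : 0\leq i,j<n\}$. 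Any finitely generated module over a commutative ring is PI, so $B(r)$ and $C(r)$ are PI, and the quotient $\overline A$ inherits this property. The main obstacle is the normality computation for $w$ in the first step, since it is the only place the specific form of the down-up relations is invoked and it forces one to expand $wu$ and $dw$ using (R1), (R2), and the definition of $\var$; once that is in hand, everything else is a standard module-finite-over-centre argument.
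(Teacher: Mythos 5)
Your argument is correct and takes essentially the same route as the paper: pass to the quotient by the normal element $w$, recognize the resulting relation $\bar d\bar u - r\bar u\bar d = -\varepsilon$ as presenting (an image of) the quantum plane or the quantized Weyl algebra according as $\gamma=0$ or $\gamma\neq 0$, and use that these algebras are PI when $r$ is a root of unity. The only difference is one of detail: you verify the normality of $w$ and prove the PI property via module-finiteness over the central subring $K[u^n,d^n]$, both of which the paper leaves implicit or cites as well known.
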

\begin{proof}
Denote the images of $u$ and $d$ in $\overline{A}$ by
$\overline{u}, \overline{d}$, respectively. Then $\overline{A}$ is
generated by $\overline{u}, \overline{d}$ and we have that
$$-r\overline{u}\overline{d}+\overline{d}\overline{u}+\varepsilon
=0.$$ It follows that $\overline{A}$ is isomorphic to a quantized
Weyl algebra if $\gamma\neq 0$ and to the coordinate ring of a
quantum plane if $\gamma=0$. Since $r$ is a primitive $n^{th}$
root of unity for $n>1$, it is well known that these algebras are
PI.
\end{proof}

Recall that given a ring $D$, an automorphism $\sigma$ of $D$ and
a central element $a\in D$, the generalized Weyl algebra
$D(\sigma, a)$ is the ring extension of $D$ generated by $x$ and
$y$, subject to the relations: $xb=\sigma (b)x, by=y\sigma (b),
\text{for all}\; b\in D, yx=a, xy=\sigma (a).$ Noetherian down-up
algebras can be presented as generalized Weyl algebras, see
\cite{KirkmanMussonPassman}.

We need
the following result of Bavula and van Oystaeyen \cite[Theorem
1.2]{BVo}.

\begin{thm}\label{KdimBVo}
Let $R$ be a commutative Noetherian ring with ${\Kdim}\,R=m$ and let
$T=R(\sigma,a)$ be a generalized Weyl algebra. Then ${\Kdim}\,T=m$
unless there is a height $m$ maximal ideal $P$ of $R$ such that
one of the following holds:
\begin{enumerate}
 \item[a)] $\sigma^n(P)=P$, for some $n>0$;
 \item[b)] $a\in \sigma^n(P)$ for infinitely many $n$.
\end{enumerate}
If there is an ideal $P$ as above such that $a)$ or $b)$ holds
then ${\Kdim}\,T=m+1$.
\end{thm}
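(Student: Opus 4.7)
The approach splits the problem along the normal element $w$: one side uses the PI quotient $\bar A = A/Aw$, the other the localization $A_w = A S^{-1}$ at the Ore set $S = \{w^n : n \geq 0\}$. Normality of $w$ is immediate from $\sigma(w_1) = r w_1$, so $S$ is Ore. The plan combines Jategaonkar's FBN theorem (applied to $\bar A$) with a Krull dimension bound on $A_w$ (Bavula--van Oystaeyen).

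My first goal is Proposition~\ref{nelt}: $\Kdim A_w = 2$. I would exhibit $A$ as a generalized Weyl algebra $R(\sigma, a)$ over a commutative Noetherian ring $R$ of Krull dimension $2$, using the presentation from \cite{KirkmanMussonPassman}, and then invoke Theorem~\ref{KdimBVo}. This reduces matters to identifying which height-$2$ maximal ideals $P \subset R$ satisfy $\sigma^n(P) = P$ for some $n$, or have $a \in \sigma^n(P)$ for infinitely many $n$---precisely the ideals that push $\Kdim A$ above $\Kdim R$. I would pin these exceptional ideals down explicitly in terms of the image of $w$ in $R$, verifying that each contains (a power of) $w$, so that inverting $w$ removes them all and leaves $\Kdim A_w = \Kdim R = 2$.

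With this in hand, I would analyze a finitely generated essential extension $M$ of a simple $A$-module $V$ by a dichotomy on $M$. If $M$ is $w$-torsion, say $w^k M = 0$, then the $w$-adic filtration $M \supset wM \supset \cdots \supset w^k M = 0$ has each subquotient a finitely generated $\bar A$-module; since $\bar A$ is PI, hence FBN, it satisfies $(\diamond)$ by Jategaonkar, and an inductive socle argument along the filtration forces each subquotient to be Artinian, so $M$ has finite length. If $M$ is not $w$-torsion then the $w$-torsion submodule of $M$ fails to be essential, which forces $V$ (and hence $M$) to be $w$-torsion-free; thus $M$ embeds in $M_w$, a finitely generated essential extension of the simple $A_w$-module $V_w$. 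It then suffices to prove $(\diamond)$ for $A_w$ and descend the Artinianness of $M_w$ back to $M$ using the finite generation of $M$ over $A$.

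The principal obstacle is extracting $(\diamond)$ for $A_w$ from the bound $\Kdim A_w = 2$, since Krull dimension alone is not enough. The most promising route is to show $A_w$ is module-finite over its centre, and hence PI: because $r$ is a primitive $n$-th root of unity, large powers of $u$ and $d$ in $A$ are already close to being central, and inverting $w$ should absorb the remaining obstructions so that Jategaonkar can finish the job on $A_w$. A fallback is to adapt Dahlberg's argument for $U(\fsl_2)$ directly to $A_w$, using its generalized Weyl algebra structure together with $\Kdim A_w = 2$ to handle each primitive quotient individually.
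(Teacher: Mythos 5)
There is a genuine gap, and it is structural: your proposal does not prove the statement at hand. The statement is the Bavula--van Oystaeyen theorem computing ${\Kdim}\,T$ for a generalized Weyl algebra $T=R(\sigma,a)$ over a commutative Noetherian ring $R$: one must show ${\Kdim}\,T=m$ in the absence of an exceptional height-$m$ maximal ideal, and ${\Kdim}\,T=m+1$ when such a $P$ with $\sigma^n(P)=P$ (some $n>0$) or $a\in\sigma^n(P)$ (infinitely many $n$) exists. Your text never engages with this claim at all. Instead you write that you would ``invoke Theorem~\ref{KdimBVo}'' to establish Proposition~\ref{nelt} and then outline a proof of Theorem~\ref{rootof1} (the dichotomy on $w$-torsion, the PI quotient $\bar A=A/Aw$, the localization $A_w$). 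Relative to the assigned statement this is circular: you assume the theorem you were asked to prove and use it to derive a downstream application. Nothing in the proposal addresses the actual content --- no upper bound ${\Kdim}\,T\le m+1$, no argument that generic orbits of maximal ideals under $\sigma$ do not raise the Krull dimension, no construction of a critical module or chain of submodules witnessing ${\Kdim}\,T=m+1$ in cases a) and b). A correct proof requires the orbit analysis of maximal ideals of $R$ under $\sigma$ and the Krull-dimension computations of Bavula and van Oystaeyen; for the record, the paper itself offers no proof either, quoting the result verbatim from \cite{BVo}.

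As a secondary point: even read as a (misdirected) proof sketch of Theorem~\ref{rootof1}, your plan diverges from the paper in a way that matters. You propose to extract $(\diamond)$ for $A_w$ by showing $A_w$ is module-finite over its centre, hence PI; but that is the characteristic-$p$ argument (via the central subalgebra $Z'$), and in characteristic zero, with $r$ a root of unity but $\gamma$ possibly nonzero, $A_w$ need not be PI. The paper instead uses that $w^n$ is central, that $P=(w^n-\lambda)A$ is prime by \cite[Theorem 3.15]{Praton}, and that $A_w/P_w$ is prime of Krull dimension one, so that \cite[Prop 5.5]{M} applies --- no PI property of $A_w$ is claimed or needed. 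So your fallback (adapting Dahlberg) is closer in spirit to what the paper actually does, but the main defect remains that the target theorem itself is left unproved.
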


Given $\lambda_0,\lambda_1\in K$ and $n\in \Z$ there is a unique $\lambda_n\in K$ such that
$$\lambda_n=\alpha\lambda_{n-1}+\beta\lambda_{n-2}+\gamma.$$

For all $n\in \Z$ we have, see \cite[Lemma 2.3]{CarvalhoMusson}
$$\sigma^{-n}(x-\lambda_0)=(x-\lambda_n,y-\lambda_{n+1}).$$

\begin{lem}\label{maximal}
If $M$ is a maximal ideal of $R$ such that $x\in \sigma^n(M)$ for infinitely many $n$, then $\sigma^n(M)=M$ for some $n>0$.
\end{lem}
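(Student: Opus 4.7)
The plan is to pass to the residue field $L=R/M$, let $\lambda_0,\lambda_1\in L$ be the images of $x$ and $y$, and extend $(\lambda_k)$ to all $k\in\Z$ by the linear recurrence $\lambda_n=\alpha\lambda_{n-1}+\beta\lambda_{n-2}+\gamma$. Using \cite[Lemma 2.3]{CarvalhoMusson}, I will identify $\sigma^n(M)$ with the kernel of the evaluation $R\to L$ sending $x\mapsto\lambda_{-n}$ and $y\mapsto\lambda_{-n+1}$. This rephrases the hypothesis ``$x\in\sigma^n(M)$ for infinitely many $n$'' as ``$\lambda_k=0$ for infinitely many $k<0$'', and reduces the conclusion to showing that $(\lambda_k)_{k\in\Z}$ is periodic of some positive period.

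Next I will write out the general solution of the recurrence. Since $\alpha=2r$, $\beta=-r^2$ and $r\neq 1$, the characteristic polynomial $X^2-\alpha X-\beta=(X-r)^2$ has $r$ as a double root, while $1-\alpha-\beta=(1-r)^2\neq 0$, so $C=\gamma/(1-r)^2$ supplies a constant particular solution. Hence in $L$
$$\lambda_k=(A+Bk)r^k+C\qquad (A,B\in L).$$
The heart of the argument is then a pigeonhole step modulo the order $n$ of $r$: using $r^n=1$, for $k=c+jn$ in a fixed residue class $c\in\{0,1,\dots,n-1\}$ the value $\lambda_k=(A+Bc)r^c+jBnr^c+C$ is affine in $j$ with slope $Bnr^c$. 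If $B\neq 0$ this slope is nonzero, so $\lambda_k$ vanishes for at most one $j$ in each of the $n$ residue classes, giving at most $n$ zeros in total and contradicting the hypothesis. Therefore $B=0$, and $\lambda_k=Ar^k+C$ is manifestly periodic of period $n$, which yields $\sigma^n(M)=M$ as required.

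The step I expect to require the most care is the identification of $\sigma^n(M)$ with the index-shifted maximal ideal from \cite[Lemma 2.3]{CarvalhoMusson}: when $K$ is not algebraically closed, the parameters $\lambda_0,\lambda_1$ naturally live in $L$ rather than in $K$, and one has to read the Carvalho--Musson formula in that setting, which is why I plan to work throughout with evaluation maps into $R/M$ rather than with explicit generators for $M$. Once that identification is in place, the remaining work is just the explicit solution of the recurrence together with the pigeonhole on residues modulo $n$.
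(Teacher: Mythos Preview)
Your proposal is correct and follows essentially the same approach as the paper: both solve the recurrence explicitly as $\lambda_k=(A+Bk)r^k+C$ (the paper first normalizes so that $\lambda_0=0$, writing the solution as $c_1(r^k-1)+c_2kr^k$), and then use that $r^k$ assumes only finitely many values to force the coefficient $B$ (their $c_2$) to vanish, yielding periodicity. Your additional care in working over $L=R/M$ rather than assuming $M=(x-\lambda_0,y-\lambda_1)$ with $\lambda_i\in K$ is a reasonable precaution that the paper glosses over.
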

\begin{proof}
 We can assume that $x\in M$, that is $M=(x-\lambda_0, y-\lambda_1)$ with $\lambda_0=0$. The solution to the recursive relation is then given by
$$\lambda_n=c_1(r^ n-1)+c_2nr^ n$$
for some fixed $c_1, c_2\in K$. If $\lambda_n=0$, then $nc_2=c_1(1-r^ {-n})$, but the right side of this equation can take only finitely many values. Hence $c_2=0$ and the sequence $\{\lambda_n\}$ is periodic. Clearly this gives the result.
\end{proof}

Since $w$ is a normal element of $A$, the set $\{w^ n|n\geq 0\}$ satisfies the Ore condition. We denote by $A_w$, $R_w$ the localizations of $A$ and $R$ with respect to this set.

\begin{prop} \label{nelt} ${\Kdim}\,A_w=2$.
\end{prop}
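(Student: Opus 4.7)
The plan is to apply the Bavula--van Oystaeyen theorem (Theorem \ref{KdimBVo}) to the generalized Weyl algebra presentation of $A$. Recall from \cite{KirkmanMussonPassman} that $A = R(\sigma, a)$ where $R = K[h_1, h_2]$ is a polynomial ring in two commuting variables (corresponding to $h_1 = ud$, $h_2 = du$), with $\sigma(h_1) = h_2$, $\sigma(h_2) = \al h_2 + \gb h_1 + \gam$, and $a = h_1$. Under this identification $w = -rh_1 + h_2 + \var$ lies in $R$ as a nonconstant linear polynomial, so $\Kdim R_w = 2$, and the Ore localization $A_w = R_w(\sigma, a)$ is a generalized Weyl algebra over $R_w$. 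Theorem \ref{KdimBVo} then yields $\Kdim A_w = 2$ unless there is a height-two maximal ideal $P$ of $R_w$ with either (a) $\sigma^n(P) = P$ for some $n > 0$, or (b) $a \in \sigma^n(P)$ for infinitely many $n$.

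Since $a = h_1$ plays the role of $x$ in Lemma \ref{maximal}, case (b) reduces at once to case (a). To rule out (a), I would suppose $M = P \cap R$ is a height-two maximal ideal of $R$ with $w \notin M$ and $\sigma^n(M) = M$ for some $n > 0$. After passing to the algebraic closure of $K$, write $M = (h_1 - \gl_0, h_2 - \gl_1)$; the identification $\sigma^{-n}(M) = (h_1 - \gl_n, h_2 - \gl_{n+1})$ recalled before Lemma \ref{maximal} then translates $\sigma^n(M) = M$ into periodicity of the sequence satisfying $\gl_m = 2r\gl_{m-1} - r^2 \gl_{m-2} + \gam$. Because $r$ is a double root of $f$ with $r \neq 1$, the general solution has the form $\gl_m = A r^m + B m r^m + \gam/(1-r)^2$, and since $K$ has characteristic zero with $r^n = 1$, the condition $\gl_{m+n} = \gl_m$ forces $Bn = 0$, hence $B = 0$.

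The heart of the argument is then a direct evaluation of $w = -rh_1 + h_2 + \gam/(r-1)$ at the point $(\gl_0, \gl_1) = (A + \gam/(1-r)^2,\ Ar + \gam/(1-r)^2)$: the $A$-contributions cancel and the constant part collapses to $\gam/(1-r) + \gam/(r-1) = 0$. Hence $w \in M$, contradicting $w \notin M$, and (a) cannot occur either. I expect this final evaluation to be the only delicate point; it depends on the carefully chosen normalization $\var = \gam/(r-1)$ in the definition of $w$, which is precisely what forces $w$ into every $\sigma$-periodic maximal ideal of $R$, so that no obstructive orbits survive the localization. With both (a) and (b) excluded, Theorem \ref{KdimBVo} yields $\Kdim A_w = 2$.
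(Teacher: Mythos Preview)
Your argument follows the same route as the paper's: both apply Theorem~\ref{KdimBVo} to $A_w = R_w(\sigma,a)$, use Lemma~\ref{maximal} to reduce case (b) to case (a), and then show that every $\sigma$-periodic maximal ideal of $R$ contains $w$. The only difference is in this last step: the paper passes to the Jordan-type coordinates $(w_1,w_2)$ and invokes \cite[Lemma~2.2(ii)]{CarvalhoMusson} to obtain $a_1=0$, whereas you solve the recurrence directly in the $(h_1,h_2)$ coordinates and evaluate $w$ at $(\lambda_0,\lambda_1)$.

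One small point to tighten: when you write ``with $r^n=1$'' you are conflating the $\sigma$-period of $M$ with the order of the root of unity $r$ fixed at the start of the section; these integers are a priori unrelated. Your conclusion $B=0$ is nevertheless correct: comparing $\lambda_0=\lambda_n$ and $\lambda_1=\lambda_{n+1}$ yields $B(r^n-1)=0$, so either $B=0$ directly or $r^n=1$ and then $Bnr^n=0$ forces $B=0$ in characteristic zero.
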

\begin{proof}
Note that $A_w=R_w(\sigma, \lambda)$ is a generalized Weyl algebra, so by Lemma \ref{maximal} and Theorem \ref{KdimBVo}, we need to show that for any maximal ideal $P$ of $R_w$ and $n>0$ we have $\sigma^n(P)\neq P$. We show that equivalently if $M$ is a maximal ideal of $R$ such that $\sigma^n(M)=M$, then $w\in M$. Indeed if $M=(w_1-a_1,w_2-a_2)$ then from \cite[Lemma 2.2(ii)]{CarvalhoMusson} we have $a_1=0$ and the result follows.
\end{proof}
\par\noindent{\bf Proof of Theorem \ref{rootof1}.} Let $V$ be a simple $A$-module and $M$ a finitely generated essential extension of $V$. There are two cases.

If $wV=0$, it is enough to show that $N=\ann_M(Aw)$ is Artinian. However $N$ is a module over the PI algebra $A/Aw$.

If $wV\neq 0$ then since $w^n$ is central there exists $\lambda\in K$, $\lambda\neq 0$ such that $(w^ n-\lambda)V=0$. By \cite[Theorem 3.15 ]{Praton} $P=(w^ n-\lambda)A$  is prime. By a similar argument as before we can assume $PM=0$. Let $r,s\in K[w]$ be such that
$$1=rw+s(w^ n-\lambda).$$
This implies that $M=wM$ and $ann_M(w)=0$, otherwise $wV=0$. So $M$ is an $A_w$-module which is annihilated by $P_w$. Since 
${\Kdim}\,A_w=2$
and $P_w$ is a nonzero prime ideal, $A_w/P_w$ is a prime  of Krull dimension one and the result follows from \cite[Prop 5.5]{M}. {$\square$\par\bigskip}

\section{Down-up Algebras}
{\bf Proof of Theorem \ref{c2}}
If the roots of $X^2-\alpha X-\beta$ are both equal to one or distinct roots of unity it follows from  \cite[Corollary 3.2]{CarvalhoLompPusat} that any finitely generated monolithic $A$-module is Artinian. By Theorem \ref{rootof1}, the same holds if both roots of the quadratic equation are equal roots of unity.

Suppose that the roots of $X^2-\alpha X-\beta$ are not both roots of
unity. Note that 1 is a root of this equation, and in this case the other root is $-\gb.$ By Lemma \ref{homomorphic}, either the coordinate algebra of the quantum plane $B(q)$ or the quantized Weyl algebra $C(q)$ (with $q$ not a root of 1) is a homomorphic image
of $A$ depending on $\gamma =0$ or $\gamma\neq 0$ respectively. Hence by Theorems \ref{theoremB} and \ref{c1} it follows that $A$ does not
satisfy condition $(\diamond)$.
$\square$


\begin{thebibliography}{99}
\bibitem{BenkartRoby}{G. Benkart, T. Roby, Down-up algebras, J. of Algebra 209 (1998), 305--344. Addendum
       J. of Algebra 213 (1999), 378}.

\bibitem{BVo}{V. Bavula, F. Van Oystaeyen},{Krull dimension of generalized Weyl algebras and iterated skew
   polynomial rings: commutative coefficients}, J. Algebra 208 (1998) 1--34.

\bibitem{Ken81} K.A. Brown, The structure ofmodules over polycyclic groups,
Math. Proc. Cambridge Philos. Soc. 89, no. 2 (1981) 257--283.

\bibitem{CarvalhoMusson} P.A.A.B. Carvalho, I.M. Musson, Down-Up Algebras and their Representation Theory,
J. Algebra 228 (2000) 286--310.

\bibitem{CarvalhoLompPusat} P.A.A.B. Carvalho, C. Lomp, D. Pusat-Yilmaz, Injective Modules over Down-Up Algebras, Glasgow Math. J. (to
appear).

\bibitem{CH} A.W. Chatters, C.R. Hajarnavis, Rings with chain conditions,
Research Notes in Mathematics, Vol. 44, Pitman Advanced Publishing
Program (1980).

\bibitem{Dahlberg89} R.P. Dahlberg, Injective hulls of simple ${\rm sl}(2,\C)$ modules are locally Artinian,
Proc. Amer. Math. Soc. 107, no. 1 (1989) 35--37.

\bibitem{Donkin81} S. Donkin, On the Noetherian property in endomorphism rings of certain comodules,
J. Algebra 70, no. 2 (1981) 394--419.




\bibitem{Hall} P. Hall, On the finiteness of certain soluble groups, Proc. London Math. Soc. (3)
9 (1959) 595--622.

\bibitem{Hildebrand} J. Hildebrand, Centers of down-up algebras over fields of prime characteristic, Comm. Algebra 30 (2002) 171--191.

\bibitem{Jategaonkar_conj} A.V. Jategaonkar, Jacobson's conjecture and modules over fully bounded noetherian rings,
J. Algebra 30 (1974) 103--121.

\bibitem{Jategaonkar74} A.V. Jategaonkar, Integral group rings of polycyclic-by-finite groups,
J. Pure Appl. Algebra 4 (1974) 337--343.


\bibitem{KirkmanMussonPassman} E. Kirkman, I.M. Musson, D.S. Passman, Noetherian down-up algebras,
Proc. Amer. Math. Soc. 127 (11) (1999) 3161--3167.



\bibitem{K2008} R.S. Kulkarni, Down-up algebras at roots of unity, Proc. Amer. Math. Soc. 136(10) (2008) 3375--3382.


\bibitem{Musson80} I.M. Musson, Injective modules for group rings of polycyclic groups, I, Quart. J. Math. Oxford Ser. (2) 31 (1980),
429--448.

\bibitem{M} I.M. Musson, Injective modules for group rings of polycyclic groups,
II, Quart. J. Math. Oxford Ser. (2), 31 (1980) 449--466.

\bibitem{Musson_Counter} I.M Musson, Some examples of modules over noetherian rings,
Glasgow Math. J. 23 (1982) 9--13.

\bibitem{Passman85} D.S. Passman, The algebraic structure of group rings, Reprint of the 1977 original,
Robert E. Krieger Publishing Co. Inc., Melbourne, FL, (1985).

\bibitem{Praton} I. Praton, Primitive ideals of Noetherian down-up algebras,
Comm. Algebra 32 (2004) 443--471.

\bibitem{Roseblade} J.E. Roseblade, Applications of the Artin-Rees lemma to group rings,
Symposia Mathematica, Vol. XVII (Convegno sui Gruppi Infiniti,
INDAM, Rome, 1973), Academic Press, London, 1976, 471--478.

\bibitem{SV} D.W. Sharpe, P. Vamos,
Injective modules, Cambridge Tracts in Mathematics and
mathematical Physics, Vol. 62, Cambridge (1972).

\bibitem{Zhao} K. Zhao, Centers of Down-Up algebras, J. Algebra 214 (1999) 103--121.



\end{thebibliography}
\end{document}